\numberwithin{equation}{section} 
\numberwithin{figure}{section} 
\theoremstyle{plain}
\theoremstyle{plain}
\newtheorem{thm}{Theorem}
  \theoremstyle{plain}
  \newtheorem{prop}[thm]{Proposition}
  \theoremstyle{plain}
  \newtheorem{lem}[thm]{Lemma}
  \theoremstyle{remark}
  \newtheorem*{acknowledgement*}{Acknowledgement}
\begin{document}
\newcommand{\Alb}{{\rm Alb}}

\newcommand{\Jac}{{\rm Jac}}

\newcommand{\Hom}{{\rm Hom}}

\newcommand{\End}{{\rm End}}

\newcommand{\Aut}{{\rm Aut}}

\newcommand{\NS}{{\rm NS}}

\title[The Fano surface of the Fermat cubic threefold]{The Fano surface of the Fermat cubic threefold, the del Pezzo surface
of degree $5$ and a ball quotient.}

\author{Xavier Roulleau}
\begin{abstract}

We study in this paper a surface which has many intriguing and puzzling
aspects: on one hand it is related to the Fano surface of lines of
a cubic threefold, and the other hand it is related to a Ball quotient
occurring in the realm of hypergeometric functions, as studied by
Deligne and Mostow. It is moreover connected to a surface constructed
by Hirzebruch, in its works for constructing
surfaces with Chern Ratio equal to $3$ by arrangements of lines on
the plane. Furthermore, we obtain some results that are analogous
to the results of Yamasaki-Yoshida when they computed the lattice
of the Hirzebruch ball quotient surface. 
\end{abstract}

\maketitle
2000 \emph{Mathematics subject classification}. Primary 14J29; Secondary
14J25, 22E40.

\emph{Key words and phrases}. Algebraic surfaces, Ball lattices, Orbifolds, Fano surfaces of cubic threefolds, degree 5 del Pezzo surface.

\section{Introduction}

Let us recall the following well known Theorem (see \cite{Kobayashi},
Theorem $2$) that relates an inequality between (log) Chern numbers
with the theory of Ball quotients:
\begin{thm}
\label{thm:Kobayashi}{[}Bogomolov, Hirzebruch, Miyaoka, Sakai, Yau].
Let $S$ be a smooth projective surface with ample canonical bundle
$K$ and let $D$ be a reduced simple normal crossing divisor on $S$
(may be $0$). Suppose that $K+D$ is nef and big. Then the following
inequality:\[
\overline{c}_{1}^{2}\leq3\overline{c}_{2}\]
holds, where $\bar{c}_{1}^{2},\bar{c}_{2}$ are the logarithmic Chern
numbers of $S'=S-D$ defined by: $\bar{c}_{1}^{2}=(K+D)^{2}$ and
$\bar{c}_{2}=\chi_{top}(S')$. Furthermore, the equality occurs if
and only if $S'$ is a ball quotient i.e., if and only if we obtain
$S'$ by dividing the ball  $\mathbb{B}_{2}$ with respect to a discrete
group $\Gamma$ of automorphisms acting on $\mathbb{B}_{2}$ properly
discontinuously and with only isolated fixed points. \\
If a smooth projective surface $X$ contains a rational curve and
its Chern numbers satisfies $c_{1}^{2}=3c_{2}>0$, then $X$ is the
projective plane.
\end{thm}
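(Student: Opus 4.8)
The plan is to run through the Enriques--Kodaira classification according to the Kodaira dimension $\kappa(X)$, using the rational curve \emph{only} to eliminate the surfaces of general type. The two numerical inputs I would use throughout are Noether's formula $\chi(\mathcal{O}_X)=\frac{1}{12}(c_1^2+c_2)$, which together with $c_1^2=3c_2$ gives $\chi(\mathcal{O}_X)=\frac{1}{3}c_2>0$ and $c_1^2=9\chi(\mathcal{O}_X)$, and the behaviour of the Chern numbers under a blow-up: $c_1^2$ drops by $1$ and $c_2$ increases by $1$. Writing $X\to X_{\min}$ for the contraction of the $k\ge 0$ exceptional curves onto a minimal model, this reads $c_1^2(X_{\min})=c_1^2(X)+k$ and $c_2(X_{\min})=c_2(X)-k$.

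First I would dispose of the case $\kappa(X)=2$, which is where the hypothesis on the rational curve is needed. Applying the inequality $\overline c_1^2\le 3\overline c_2$ of Theorem~\ref{thm:Kobayashi} (with $D=0$) to the minimal model gives $c_1^2(X)+k\le 3\bigl(c_2(X)-k\bigr)$; substituting $c_1^2(X)=3c_2(X)$ forces $4k\le 0$, hence $k=0$ and $X=X_{\min}$ is already minimal and realises the equality $c_1^2=3c_2$. By the equality clause of Theorem~\ref{thm:Kobayashi} (the Miyaoka--Yau characterisation of the extremal case) the surface $X$ is then a ball quotient $\mathbb{B}_2/\Gamma$ with $\Gamma$ torsion-free, so that its universal cover is the bounded domain $\mathbb{B}_2\subset\mathbb{C}^2$. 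Any rational curve would be the image of a nonconstant map $\mathbb{P}^1\to X$; since $\mathbb{P}^1$ is simply connected this map lifts to a holomorphic map $\mathbb{P}^1\to\mathbb{B}_2$, which must be constant because $\mathbb{B}_2$ is bounded. This contradicts the presence of a rational curve, so $X$ is not of general type.

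It then remains to examine $\kappa(X)\le 1$. The condition $c_1^2>0$ rules out $\kappa(X)\in\{0,1\}$ at once, since then $K_{X_{\min}}^2=0$ and so $c_1^2(X)=-k\le 0$; it likewise rules out the irrational ruled surfaces, whose minimal models (ruled over a curve of genus $g\ge 1$) satisfy $c_1^2=8(1-g)\le 0$, again forcing $c_1^2(X)\le 0$. Hence $X$ is a rational surface, so $\chi(\mathcal{O}_X)=1$ and therefore $c_2=3\chi(\mathcal{O}_X)=3$, i.e. $e(X)=3$ and $b_2(X)=1$. The only smooth rational surface of Picard number $1$ is $\mathbb{P}^2$ (the other minimal rational surfaces are the Hirzebruch surfaces $\mathbb{F}_n$ with $b_2=2$, and a blow-up only increases $b_2$), so $X\cong\mathbb{P}^2$, which indeed contains lines and satisfies $c_1^2=3c_2=9$.

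The only genuinely deep ingredient is the equality clause of Theorem~\ref{thm:Kobayashi}, which I would simply cite. The point needing care is precisely its application in the general type case: a rational curve on a minimal surface of general type is forced by adjunction to be a $(-2)$-curve, on which $K_X$ is trivial, so $K_X$ is nef and big but a priori not ample. I would therefore invoke the Miyaoka--Yau equality theorem in its form for smooth minimal surfaces of general type, for which equality $c_1^2=3c_2$ already yields the smooth ball quotient description (and hence the automatic ampleness of $K_X$ and the absence of rational curves), rather than the version that takes ampleness of $K$ as a standing hypothesis.
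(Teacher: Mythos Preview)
The paper does not give a proof of this theorem at all: it is quoted as a classical result (with a reference to \cite{Kobayashi}) and used as a tool in the rest of the article. So there is no ``paper's own proof'' to compare against; your sketch stands on its own as an argument for the last assertion of the statement, taking the log Bogomolov--Miyaoka--Yau inequality and its equality case as input.

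As such, your argument is sound. The case split by Kodaira dimension is standard and your numerics are correct: Noether gives $c_2=3\chi(\mathcal O_X)>0$, the blow-up calculus forces $k=0$ in the general type case and $c_1^2(X)\le 0$ in the $\kappa\in\{0,1\}$ and irrationally ruled cases, and for a rational $X$ one gets $e(X)=3$, $b_2=1$, hence $X\simeq\mathbb{P}^2$. The one delicate point is exactly the one you flag: in the general type step you are applying Miyaoka--Yau to the minimal model $X_{\min}$, where $K$ is only nef and big a priori (a rational curve on a minimal surface of general type would be a $(-2)$-curve), so you cannot literally invoke the first paragraph of the theorem as stated, which assumes $K$ ample. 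Your fix---citing the version of Miyaoka--Yau valid for smooth minimal surfaces of general type, where equality already forces a torsion-free ball quotient (hence ampleness of $K$ and absence of rational curves a posteriori)---is the correct way around this; just make sure the external reference you cite really covers that form.
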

Few examples of surfaces with Chern ratio $\frac{c_{1}^{2}}{c_{2}}$
equals $3$ have been constructed algebraically i.e. by ramified covers
of known surfaces. The first examples are owed independently to Inoue
and Livné (for a reference, see \cite{Hulek}). Among these examples,
there is a surface $\mathbb{S}$, with Chern numbers: \[
c_{1}^{2}(\mathbb{S})=3c_{2}(\mathbb{S})=3^{2}5^{2},\]
that is the blow-down of $(-1)$-curves of a certain cyclic cover
of the Shioda modular surface of level $5$. Hirzebruch \cite{Hirzebruch}
then constructed other examples. Starting with the degree $5$ del
Pezzo surface $\mathcal{H}_{1}$ and for any $n>1$, he constructed
a cover $\eta_{n}:\mathcal{H}_{n}\rightarrow\mathcal{H}_{1}$ of degree
$n^{5}$ branched exactly over the ten $(-1)$-curves of $\mathcal{H}_{1}$,
with order $n$, such that for $n=5$:
\begin{thm}
\label{thm:(Hirzebruch).-The-Chern}{[}Hirzebruch]. The Chern numbers
of $\mathcal{H}_{5}$ satisfies: \[
c_{1}^{2}(\mathcal{H}_{5})=3c_{2}(\mathcal{H}_{5})=3^{2}5^{4}.\]

\end{thm}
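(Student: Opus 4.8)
The plan is to compute both Chern numbers of $\mathcal{H}_5$ directly from the branched cover $\eta_5$, reducing everything to invariants of the pair $(\mathcal{H}_1,D)$, where $D$ denotes the reduced divisor formed by the ten $(-1)$-curves. First I would pin down the geometry of $D$ on the del Pezzo surface. Writing $\mathcal{H}_1$ as the blow-up of $\mathbb{P}^2$ at four points in general position, the ten lines are the four exceptional curves $E_i$ and the six strict transforms $L_{ij}=H-E_i-E_j$; one checks that $D=\sum_i E_i+\sum_{i<j}L_{ij}=6H-2\sum_i E_i=-2K_{\mathcal{H}_1}$, so $D$ is anticanonical of class $-2K$ with $K^2=5$. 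The crucial combinatorial point is that on $\mathcal{H}_1$ these ten curves meet only in ordinary nodes (the incidence graph is the Petersen graph, so each curve meets exactly three others, giving $15$ transverse double points and \emph{no} triple or higher points). This normal-crossing property is exactly why the del Pezzo surface, rather than $\mathbb{P}^2$, is the natural base.

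Next I would exploit that $D$ has simple normal crossings to describe $\eta_n$ locally and to write down the covering (proportionality) formulas. Since the branch locus has only nodes, the abelian cover of degree $n^5$ branched to order $n$ along each component is, near a node, the smooth product cover $(u,v)\mapsto(u^n,v^n)$; hence $\mathcal{H}_n$ is smooth and no resolution is needed. From Hurwitz' formula the canonical class pulls back as $K_{\mathcal{H}_n}=\eta_n^{*}\bigl(K_{\mathcal{H}_1}+\tfrac{n-1}{n}D\bigr)$, so that $c_1^2(\mathcal{H}_n)=n^5\bigl(K_{\mathcal{H}_1}+\tfrac{n-1}{n}D\bigr)^2$. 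For the second Chern number I would stratify $\mathcal{H}_1=(\mathcal{H}_1\setminus D)\sqcup D^{\circ}\sqcup\{\text{nodes}\}$ and use multiplicativity of the topological Euler characteristic over each étale stratum: the fibres have $n^5$, $n^4$ and $n^3$ points respectively, giving
\[
c_2(\mathcal{H}_n)=e(\mathcal{H}_n)=n^5\Bigl(e(\mathcal{H}_1\setminus D)+\tfrac{1}{n}\,e(D^{\circ})+\tfrac{1}{n^2}\cdot 15\Bigr).
\]

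It then remains to insert the numerical data. Using $D=-2K$ one gets $K_{\mathcal{H}_1}+\tfrac{n-1}{n}D=\tfrac{2-n}{n}K_{\mathcal{H}_1}$, hence $\bigl(K_{\mathcal{H}_1}+\tfrac{n-1}{n}D\bigr)^2=\tfrac{5(n-2)^2}{n^2}$. From $e(\mathcal{H}_1)=7$, $e(D)=10\cdot 2-15=5$, $e(\mathcal{H}_1\setminus D)=2$ and $e(D^{\circ})=5-15=-10$, the bracket above equals $2-\tfrac{10}{n}+\tfrac{15}{n^2}$. Specialising to $n=5$ gives $\bigl(K+\tfrac45 D\bigr)^2=\tfrac{9}{5}$ and Euler bracket $\tfrac{3}{5}$, so that $c_1^2(\mathcal{H}_5)=5^5\cdot\tfrac{9}{5}=3^2 5^4$ and $c_2(\mathcal{H}_5)=5^5\cdot\tfrac{3}{5}=3\cdot 5^4$, whence $c_1^2=3c_2=3^2 5^4$. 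As a sanity check, imposing $\bar c_1^2=3\bar c_2$ at the orbifold level amounts to $5(n-2)^2=3(2n^2-10n+15)$, i.e. $(n-5)^2=0$, which explains why $n=5$ is precisely the value producing the Chern ratio $3$ and, via Theorem \ref{thm:Kobayashi}, a ball quotient.

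The main obstacle is not the arithmetic but the justification of the two covering formulas, i.e.\ that the Chern numbers of the honest smooth surface $\mathcal{H}_n$ are literally $n^5$ times the orbifold invariants of $(\mathcal{H}_1,\tfrac{n-1}{n}D)$. This rests entirely on the normal-crossing property established in the first step, which guarantees smoothness of the cover and the clean local fibre counts; once that is in place, the Hurwitz computation of $c_1^2$ and the stratified Euler-characteristic computation of $c_2$ are routine.
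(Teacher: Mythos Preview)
The paper does not give its own proof of this statement: Theorem~\ref{thm:(Hirzebruch).-The-Chern} is quoted from \cite{Hirzebruch} as background, the point being that the paper then proves analogous results (Theorem~\ref{thm: principal} A, B) for the Fano surface. So there is nothing to compare against here.

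That said, your argument is correct and is essentially Hirzebruch's original computation, and it also mirrors exactly the techniques the paper itself uses in the proof of Proposition~\ref{pro:Let X be} (Hurwitz for $c_1^2$, stratified Euler characteristic for $c_2$). The identification $D=-2K_{\mathcal{H}_1}$, the Petersen configuration with its $15$ nodes, and the numerics $e(\mathcal{H}_1)=7$, $e(D)=5$ are all right. The one step you pass over a bit quickly is the local model at a node: writing the cover as $(u,v)\mapsto(u^n,v^n)$, and hence getting fibres of size $n^3$ over the nodes, requires that the two inertia subgroups at each node be independent inside the Galois group $(\mathbb{Z}/n\mathbb{Z})^5$. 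This is not automatic for an arbitrary abelian cover branched along a nodal divisor, but it does hold for Hirzebruch's specific construction (coming from the complete quadrilateral), so the computation goes through.
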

Then Ishida \cite{Ishida} established a link between the surface
$\mathcal{H}_{5}$ and the Inoue-Livné surface $\mathbb{S}$:
\begin{prop}
\label{pro:There-is-a}{[}Ishida]. There is a étale map $\mathcal{H}_{5}\rightarrow\mathbb{S}$
that is a quotient of $\mathcal{H}_{5}$ by an automorphism group
of order $25$.
\end{prop}
In the present  paper, we give an example of a surface with log Chern
numbers satisfying $\bar{c}_{1}^{2}=3\bar{c}_{2}$, and we obtain
in part A) and B) of Theorem \ref{thm: principal} below, results
analogous to Theorem \ref{thm:(Hirzebruch).-The-Chern} and Proposition
\ref{pro:There-is-a}:\\
The variety that parametrizes the lines on a smooth complex cubic
threefold $F\hookrightarrow\mathbb{P}^{4}$ is a smooth surface of
general type called the Fano surface of $F$ \cite{Clemens}. Let
$S$ be the Fano surface of the Fermat cubic threefold:\[
F=\{x_{1}^{3}+x_{2}^{3}+x_{3}^{3}+x_{4}^{3}+x_{5}^{3}=0\}.\]
It is the only Fano surface that contains $30$ elliptic curves \cite{Roulleau} ;
the aim of this paper is to prove the following Theorem:
\begin{thm}
\label{thm: principal}A) There is an open subvariety $S'\subset S$,
complement of $12$ disjoint elliptic curves on $S$, such that $S'$
is a ball quotient with log Chern numbers $\bar{c}_{1}{}^{2}=3\bar{c}_{2}=3^{4}$.\\
B) There is a étale map $\kappa:\mathcal{H}_{3}\rightarrow S$
that is a quotient of $\mathcal{H}_{3}$ by an automorphism of order
$3$ and there is a degree $3^{4}$ ramified cover $\eta:S\rightarrow\mathcal{H}_{1}$
branched with order $3$ over the ten $(-1)$-curves of $\mathcal{H}_{1}$.\\
C) The surface $\mathcal{T}=\kappa^{-1}S'\subset\mathcal{H}_{3}$
is a ball quotient. Let $\Lambda$ be the lattice of $\mathcal{T}$,
i.e. the transformation group of the $2$-dimensional unit ball $\mathbb{B}_{2}$
such that $\Lambda\setminus\mathbb{B}_{2}$ is isomorphic to $\mathcal{T}$.
The lattice $\Lambda$ is the commutator group of the congruence group:
\[
\Gamma=\{T\in GL_{3}(\mathbb{Z}[\alpha])/\, T\equiv I\,\mbox{modulo}\,(1-\alpha)\,\mbox{and}{\,\,}^{t}\bar{T}HT=H\}\]
where $\alpha$ is a primitive third root of unity, $I$ is the identity
matrix and $H$ is Hermitian diagonal matrix with entries $(1,1,-1)$
defining the $2$-dimensional unit ball $\mathbb{B}_{2}$. 

D) The lattice $\Gamma$ is the Deligne-Mostow lattice associated
to the $5$-tuple $(1/3,1/3,1/3,1/3,2/3)$ (number $1$ in \cite{Deligne}
p. 86).
\end{thm}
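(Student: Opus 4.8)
The plan is to prove the four parts in order, taking the classical invariants of the Fano surface as input and using the equality case of the Bogomolov--Hirzebruch--Miyaoka--Sakai--Yau inequality (Theorem \ref{thm:Kobayashi}) as the engine for every ball-quotient assertion.

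For part A) I would choose the twelve curves to be a maximal disjoint sub-family among the $30$ elliptic curves of $S$, each of which satisfies $E_i^2=-3$ and hence $K_S\cdot E_i=3$ by adjunction. Writing $D=\sum_{i=1}^{12}E_i$ and using the known invariants $K_S^2=45$ and $c_2(S)=27$, disjointness gives $D^2=-36$ and $K_S\cdot D=36$, whence $(K_S+D)^2 = 45+2\cdot 36-36 = 81$, while $\chi_{top}(S')=\chi_{top}(S)=27$ because each removed curve has Euler number $0$; thus $\bar c_1^2=3\bar c_2=81$. Since $(K_S+D)\cdot E_i=0$ by adjunction and $K_S$ is ample, one checks $K_S+D$ is nef and big, and the equality clause of Theorem \ref{thm:Kobayashi} forces $S'=S\setminus D$ to be a ball quotient.

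For parts B) and C) the idea is to place $S$ at an intermediate level of Hirzebruch's tower over the degree $5$ del Pezzo surface $\mathcal{H}_1$. I would exhibit $\eta:S\to\mathcal{H}_1$ as a degree $3^4$ cover ramified to order $3$ along the ten $(-1)$-curves, describe its Galois data as a subcover of Hirzebruch's Galois $(\mathbb{Z}/3)^5$-cover $\mathcal{H}_3\to\mathcal{H}_1$ of degree $3^5$, and observe that since both covers have identical order-$3$ ramification over the ten curves, the induced map $\kappa:\mathcal{H}_3\to S$ is \'etale of degree $243/81=3$, realizing $S$ as the quotient of $\mathcal{H}_3$ by an order-$3$ automorphism. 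Because $\kappa$ is \'etale and $S'$ is a ball quotient, $\mathcal{T}=\kappa^{-1}(S')$ is automatically a ball quotient, and its lattice $\Lambda$ is a finite-index subgroup of the lattice of $S'$; the residual task is to pin $\Lambda$ down as the commutator subgroup $[\Gamma,\Gamma]$ of the explicit congruence group $\Gamma$.

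Part D) is the heart of the matter. I would realize the Deligne--Mostow lattice of $\mu=(1/3,1/3,1/3,1/3,2/3)$ (note $\sum\mu_i=2$) as the monodromy of the local system given by the $\zeta_3$-eigenspace of $H^1$ of the family of curves $y^3=(x-t_1)\cdots(x-t_4)(x-t_5)^2$ over the configuration space of five points on $\mathbb{P}^1$. This eigenspace carries a polarized variation of Hodge structure of type $(2,1)$ over $\mathbb{Z}[\alpha]$, yielding a Hermitian form of signature $(2,1)$; a Hodge-number count shows this form is equivalent to $H=\mathrm{diag}(1,1,-1)$. The local monodromies about the collision of two of the four weight-$1/3$ points are complex reflections of order $3$, hence of the shape $I+(\alpha-1)(\cdots)$, so the whole monodromy group lands inside the level-$(1-\alpha)$ congruence group $\Gamma$. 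The main obstacle is the reverse inclusion: proving that these reflections generate all of $\Gamma$, equivalently that the Deligne--Mostow lattice is exactly $\Gamma$ rather than a proper finite-index subgroup. I would settle this by a covolume comparison, matching the orbifold Euler number coming from $\bar c_2$ in part A) against Deligne--Mostow's volume formula for $\mu$, combined with the arithmetic classification of signature-$(2,1)$ Hermitian $\mathbb{Z}[\alpha]$-lattices of the relevant discriminant; the same index bookkeeping should confirm that the surface group of $\mathcal{T}$ is precisely the commutator subgroup $[\Gamma,\Gamma]$ required in part C).
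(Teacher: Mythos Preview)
Your argument for A) is correct and is exactly the paper's computation. The real gap is in B): you write ``I would exhibit $\eta:S\to\mathcal{H}_1$'' but give no mechanism for doing so, and this is in fact the most labor-intensive step. The paper does not construct $\eta$ abstractly; it uses the specific Fermat structure. The group $A(3,3,5)\simeq(\mathbb{Z}/3\mathbb{Z})^4$ of diagonal automorphisms of the Fermat cubic acts on $S$, and one \emph{defines} $\eta$ as the quotient map $S\to X=S/G$. One must then prove $X\simeq\mathcal{H}_1$: this requires analyzing the isotropy representations at the $135$ isolated fixed points (via the tangent-space lemma for Fano surfaces) to show $X$ is smooth, computing $K_X^2=5$ from $K_S=\eta^*K_X+2\Sigma$ with $\Sigma=2K_S$, showing via Riemann--Hurwitz that the images of the elliptic curves are ten smooth rational $(-1)$-curves with the Petersen intersection pattern, and finally using an Euler-characteristic count and Theorem~\ref{thm:Kobayashi} to identify the blow-down of four of them as $\mathbb{P}^2$. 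Without this, you have no map $\eta$ at all. Once $\eta$ exists, your idea for $\kappa$ is correct and is what the paper does, formalized through Namba's classification of Abelian covers: $\mathcal{H}_3\to\mathcal{H}_1$ corresponds to the full group $Div^0(\mathcal{H}_1,D)/\!\sim\,\simeq(\mathbb{Z}/3\mathbb{Z})^5$, hence dominates $\eta$, and equal ramification forces $\kappa$ to be \'etale.

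For C) and D) your covolume/arithmetic-classification plan could in principle work, but the paper's route is both shorter and conceptually cleaner, and bypasses the ``main obstacle'' you flag. The point is that the orbifold $(\mathbb{P}^2,b)$ given by the complete quadrilateral with weight $3$ (and $\infty$ at the four triple points) is already known, by Holzapfel, to be uniformized by $\mathbb{B}_2$ with deck group exactly the congruence group $\Gamma$; this single citation simultaneously proves D) and gives the ambient lattice. Then C) is pure covering theory: any Abelian cover of $(\mathbb{P}^2,b)$ corresponds to a normal subgroup of $\Gamma$ containing $[\Gamma,\Gamma]$, so $\mathbb{B}_2/[\Gamma,\Gamma]$ is the \emph{maximal} Abelian cover. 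Namba's theorem (via the computation $Div^0(\mathbb{P}^2,D)/\!\sim\,\simeq(\mathbb{Z}/3\mathbb{Z})^5$) shows that $\mathcal{H}'_3\to\mathbb{P}^2$ is also maximal among Abelian covers, hence $\Lambda=[\Gamma,\Gamma]$ with no volume comparison needed. Your proposed route trades this one-line Galois argument for a delicate index computation.
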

We wish to remark that in order to prove the part B) and C), we use
a result of Namba on ramified Abelian covers of varieties that, to
the best of our knowledge, has never been used before this paper.

We wish also to remark that parts C) and D) of Theorem \ref{thm: principal}
is the very analog of the following result of Yamazaki and Yoshida
\cite{Yamazaki}:
\begin{thm}
{[}Yamazaki, Yoshida \cite{Yamazaki}, Theorem 1]. The lattice $\Lambda'$
of the ball quotient $\mathcal{H}_{5}$ is the commutator group of
the congruence group: \[
\Gamma'=\{T\in GL_{3}(\mathbb{Z}[\mu])/\, T\equiv I\,\mbox{modulo}\,(1-\mu)\,\mbox{and}{\,\,}^{t}\bar{T}HT=H\}\]
where $\mu$ is a primitive fifth root of unity, $I$ is the identity
matrix and $H$ is Hermitian diagonal matrix with entries $(1,1,(1-\sqrt{5})/2)$,
defining the $2$-dimensional unit ball $\mathbb{B}_{2}$. \\
The lattice $\Gamma'$ is the Deligne-Mostow lattice associated
to the $5$-tuple $(2/5,2/5,2/5,2/5,2/5)$ (number $4$ in \cite{Deligne}
p. 86).
\end{thm}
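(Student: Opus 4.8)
The plan is to prove the two assertions of the statement in tandem, building everything on the Deligne--Mostow uniformisation of the configuration space of five points on $\mathbb{P}^1$. First I would recall that the degree $5$ del Pezzo surface $\mathcal{H}_1$ is the Deligne--Mumford compactification $\overline{M}_{0,5}$ of the moduli of five ordered points on $\mathbb{P}^1$, its ten $(-1)$-curves being the ten boundary divisors where two points collide. For the symmetric weight $5$-tuple $(2/5,\dots,2/5)$ one forms the family of cyclic quintic covers $y^5=\prod_{i=1}^5(x-x_i)^2$; the periods of the holomorphic $1$-form in the $\mu$-eigenspace ($\mu=e^{2\pi i/5}$) of the $\mathbb{Z}/5$-action define the multivalued developing map to $\mathbb{B}_2$, and its monodromy is a representation into the unitary group of the cup-product form restricted to that eigenspace. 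I would then check directly that this form is defined over $\mathbb{Z}[\mu]$, has signature $(2,1)$, and, after diagonalisation over the real subfield, becomes $H=\mathrm{diag}(1,1,(1-\sqrt5)/2)$, the entry $(1-\sqrt5)/2$ being the totally real cyclotomic number dictated by the pairing on the eigenspace. Since $(2/5,\dots,2/5)$ satisfies the Deligne--Mostow $\Sigma$INT condition, the monodromy image is a lattice; this yields the Deligne--Mostow identification once that image is recognised as $\Gamma'$.

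To pin down the image as exactly the congruence group $\Gamma'$, I would use the local monodromies. Each collision of two of the five points produces a complex reflection $R$ of order $5$ with non-trivial eigenvalue $\mu$, so $R-I=(\mu-1)P$ for a rank-one operator $P$; since $\mu-1=-(1-\mu)$ this gives $R\equiv I \pmod{(1-\mu)}$, and hence all ten generating reflections $R_1,\dots,R_{10}$ lie in $\Gamma'$, establishing the inclusion of the monodromy lattice into $\Gamma'$. For the reverse inclusion I would analyse the reduction of the unitary group modulo the prime $(1-\mu)$ lying over $5$, showing that the principal congruence subgroup of level $(1-\mu)$ is generated by these reflections together with the torsion-free part; here the fact that $(1-\mu)$ is totally ramified over $5$ keeps the residue computation one-dimensional and tractable.

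The geometric half, concerning $\Lambda'$, would then come from Hirzebruch's cover $\eta_5:\mathcal{H}_5\to\mathcal{H}_1$. The Deligne--Mostow orbifold structure puts isotropy $\mathbb{Z}/5$ along each of the ten boundary curves, so that $\mathcal{H}_1$ with cone angle $2\pi/5$ along the ten curves is the orbifold quotient $\Gamma'\backslash\mathbb{B}_2$; Hirzebruch's cover, branched to order exactly $5$ over the same ten curves, is precisely the cover that trivialises this isotropy. Consequently $\mathcal{H}_5=\Lambda'\backslash\mathbb{B}_2$ is smooth (consistent with the equality $c_1^2=3c_2$ and the equality case of Theorem \ref{thm:Kobayashi}), $\Lambda'$ is torsion-free and normal in $\Gamma'$, and the deck group $\Gamma'/\Lambda'$ is the abelian group $G\cong(\mathbb{Z}/5)^5$ of the cover, namely the kernel of $(\mathbb{Z}/5)^{10}\to\mathrm{Pic}(\mathcal{H}_1)\otimes\mathbb{Z}/5$. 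Abelianness of $G$ already forces $[\Gamma',\Gamma']\subseteq\Lambda'$.

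The main obstacle is the reverse inclusion $\Lambda'\subseteq[\Gamma',\Gamma']$, i.e. showing that $G$ is the \emph{full} abelianisation $\Gamma'^{\mathrm{ab}}$. For this I would compute $\Gamma'^{\mathrm{ab}}$ from Mostow's complex-reflection presentation of the lattice: the ten reflections $R_i$ generate, and the braid-type relations among them descend in the abelianisation to identifications reducing the ten order-$5$ generators to a group of order $5^5$. Matching this count against $|G|=5^5$ forces the surjection $\Gamma'^{\mathrm{ab}}\twoheadrightarrow G$ to be an isomorphism, whence $[\Gamma',\Gamma']=\Lambda'$. I expect this abelianisation and relation bookkeeping, rather than the Hodge-theoretic identification of $H$, to be the delicate point, since it demands a complete and correct set of relations for $\Gamma'$ together with a careful comparison against the Picard-lattice computation that defines $G$.
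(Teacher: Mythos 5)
This statement is not proved in the paper at all: it is quoted verbatim from Yamazaki--Yoshida (\cite{Yamazaki}, Theorem 1) purely as motivation, to exhibit the analogy with parts C) and D) of Theorem \ref{thm: principal}. So the only thing your proposal can be measured against is the paper's proof of the analogous statement for $\mathcal{H}_{3}$, namely Theorem \ref{thm:The-group- lambda}, and there the routes diverge in an instructive way. The paper never touches a presentation of the lattice: it shows via Namba's Theorem \ref{thm de namba} that the Galois group of the maximal Abelian cover branched at most at $D$ is the explicitly computable finite group $Div^{0}(M,D)/\sim$ (Lemma \ref{lem:The-group-Namba}), observes that the Hirzebruch-type cover realizes this maximal cover, and then invokes the orbifold uniformization of $(\mathbb{P}^{2},b)$ by $\Gamma$ to conclude that the maximal Abelian cover is $\mathbb{B}_{2}/[\Gamma,\Gamma]$. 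This gives $\Gamma^{\mathrm{ab}}\simeq Div^{0}(M,D)/\sim$ for free and identifies $\Lambda$ with $[\Gamma,\Gamma]$ in one stroke.

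By contrast, the step you yourself single out as delicate --- proving the reverse inclusion $\Lambda'\subseteq[\Gamma',\Gamma']$ by computing $\Gamma'^{\mathrm{ab}}$ from ``Mostow's complex-reflection presentation'' and matching the order $5^{5}$ --- is a genuine gap as written: you neither produce the presentation nor carry out the relation bookkeeping, and obtaining a complete, verified set of relations for such a lattice is a substantial task that your sketch does not reduce to anything checkable. The same remark applies to your reverse inclusion $\Gamma'\subseteq\langle R_{1},\dots,R_{10}\rangle$ via reduction modulo $(1-\mu)$, which is asserted rather than argued. Both gaps are exactly what the Namba-plus-orbifold mechanism is designed to avoid: every Abelian quotient of $\Gamma'$ corresponds to an Abelian cover of the orbifold branched at most at $D$, Namba classifies these by finite subgroups of $Div^{0}(\mathcal{H}_{1},D)/\sim\simeq(\mathbb{Z}/5\mathbb{Z})^{5}$, and since Hirzebruch's $\eta_{5}$ realizes the full group, $\mathcal{H}_{5}$ is the maximal Abelian cover and hence equals $\mathbb{B}_{2}/[\Gamma',\Gamma']$. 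If you replace your presentation-counting step by this argument, your outline becomes essentially the paper's (and Yamazaki--Yoshida's) proof; as it stands, the central equality is not established.
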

Let us explain how the Deligne-Mostow lattices occur. Let $\mu=(\mu_{1},\dots,\mu_{5})$
be a $5$-tuple of rational numbers with $0<\mu_{i}<1$ and $\sum\mu_{i}=2$.
Let $d$ be the $l.c.m.$ of the $\mu_{i}$ and let $n_{i}$ be such
that $\frac{n_{i}}{d}=\mu_{i}$. Let $M$ be the moduli space of $5$-tuples
$x=(x_{1},\dots,x_{5})$ of distinct points on the projective line
$\mathbb{P}^{1}$. For each point $x$ of $M$, and $1\leq i<j\leq5$,
we consider the periods:\[
\omega_{ij}=\int_{x_{i}}^{x_{j}}\frac{dz}{v}\]
on the curve $v^{d}=\Pi_{k=1}^{k=5}(z-x_{i})^{n_{k}}$. These (multivalued)
maps $\omega_{ij}$ clearly factor to $Q=M/Aut(\mathbb{P}^{1})$ (that
is isomorphic to the complement of the $10$ $(-1)$-curves of the
degree $5$ del Pezzo surface). They are called hypergeometric functions
and they satisfy what is called the Appell differential equations
system. It turns out that the $\omega_{ij}$ span a $3$ dimensional
vector space $W_{\mu}$, and these yield a multivalued holomorphic
map $Q\rightarrow\mathbb{P}^{2}=\mathbb{P}(W_{\mu}^{*})$. In fact,
the image of that map lies in the (copy of a) unit Ball $\mathbb{B}_{2}$
of $\mathbb{C}^{2}\subset\mathbb{P}^{2}$. The multivaluedness is
measured by the monodromy representation \[
\pi_{1}(Q)\rightarrow Aut(\mathbb{B}_{2})\]
whose image is denoted by $\Gamma_{\mu}$. The main results of the
fundamental papers of Deligne and Mostow \cite{Deligne} and Mostow \cite{Mostow}
are to prove that the group $\Gamma_{\mu}\subset PGL(W_{\mu}^{*})$
is discontinuous and acts as a lattice on $\mathbb{B}_{2}$ for only
a finite number of $5$-tuple $\mu$, to compute these $\mu$ and to provide examples
of non-arithmetic lattices acting on $\mathbb{B}_{2}$.

\section{The Fano surface of the Fermat cubic as a cover of $\mathcal{H}_{1}$.}

Let $S$ be the Fano surface of the Fermat cubic threefold: \[
F=\{x_{1}^{3}+x_{2}^{3}+x_{3}^{3}+x_{4}^{3}+x_{5}^{3}=0\}\hookrightarrow\mathbb{P}^{4}.\]
This surface is smooth, has Chern numbers $c_{1}^{2}=45,\, c_{2}=27$
and irregularity $5$ (see \cite{Clemens}, (0.7)). Let $A(3,3,5)\subset GL_{5}(\mathbb{C})$
be the group of diagonal matrices with determinant $1$ whose diagonal
elements are in $\mu_{3}:=\{x\in\mathbb{C}/x^{3}=1\}$. By Theorem
26 of \cite{Roulleau}, the automorphism group of $F$ is the semi-direct
product of the permutation group $\Sigma_{5}$ and $A(3,3,5)\simeq(\mathbb{Z}/3\mathbb{Z})^{4}$.
An automorphism $f$ of $F$ preserves the lines and induces an automorphism
on $S$ denoted by $\rho(f)$. Let $G$ be the group $\rho(A(3,3,5))$. 

Let $X$ be the quotient of $S$ by the group $G$ and let $\eta:S\rightarrow X$
be the quotient map.

\begin{prop}
\label{pro:Let X be} The surface $X$ is (isomorphic to) the del
Pezzo surface $\mathcal{H}_{1}$, and the cover $\eta$ is branched
with index $3$ over the ten $(-1)$-curves of $X$.
\end{prop}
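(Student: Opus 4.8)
The plan is to read off the branch locus of $\eta$ from the fixed loci of $G=\rho(A(3,3,5))$ (a group of order $3^{4}=81$) and then to pin down $X$ by computing its numerical invariants. The geometric backbone is that the $30$ elliptic curves on $S$ are the curves swept out by the lines of $F$ through its $30$ Eckardt points, and that these Eckardt points are precisely the points $p_{ij}^{k}$ carrying $1$ in coordinate $i$, $-\zeta^{k}$ in coordinate $j$ ($\zeta$ a primitive cube root of unity) and $0$ elsewhere, for $1\le i<j\le 5$ and $k\in\{0,1,2\}$. First I would determine the induced $G$-action on these $30$ curves: scaling the coordinates by cube roots of unity permutes the three points $p_{ij}^{0},p_{ij}^{1},p_{ij}^{2}$ attached to a given pair $\{i,j\}$ while preserving the pair, so the $30$ curves break into $10$ orbits of size $3$, indexed by the pairs $\{i,j\}\subset\{1,\dots,5\}$. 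This is already the combinatorics of the ten lines of $\mathcal H_{1}$, on which the residual $\Sigma_{5}=\Aut(F)/A(3,3,5)$ acts by permuting the $2$-element subsets of $\{1,\dots,5\}$.

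The step I expect to be the main obstacle is the precise fixed-point analysis of this $(\mathbb Z/3\mathbb Z)^{4}$-action on the lines of $F$. For each pair $\{i,j\}$ I would exhibit an order-$3$ subgroup of $G$ fixing each of the three associated elliptic curves pointwise, show these are the only positive-dimensional fixed loci, and verify that the set-stabiliser of such a curve has order $27$; the residual group of order $9$ then maps the elliptic curve onto its image with degree $9$. It follows that $\eta$ is \'etale off the $30$ curves and ramifies with index $3$ along them, the branch divisor downstairs being the $10$ images $\bar E_{ij}$, which are smooth rational curves since the order-$9$ quotient map is ramified. At a point where two of the elliptic curves meet I would check that the local isotropy is $(\mathbb Z/3\mathbb Z)^{2}$ generated by two independent pseudo-reflections $\mathrm{diag}(\zeta,1)$ and $\mathrm{diag}(1,\zeta)$; by Chevalley--Shephard--Todd each such local quotient is smooth, so $X$ is a smooth projective surface, and one finds that $\bar E_{ij}$ and $\bar E_{kl}$ meet exactly when $\{i,j\}\cap\{k,l\}=\emptyset$, reproducing the $15$ edges of the Petersen configuration of the ten lines of $\mathcal H_{1}$.

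It remains to identify $X$ numerically. Using the $\Aut(F)$-representation on cohomology, I would compute $q(X)=h^{0}(S,\Omega^{1}_{S})^{G}$ and $p_{g}(X)=h^{0}(S,\Omega^{2}_{S})^{G}$: via the Jacobian ring $H^{2,1}(F)\cong R_{1}=\langle x_{1},\dots,x_{5}\rangle$, on which $\mathrm{diag}(\zeta^{a_{1}},\dots,\zeta^{a_{5}})$ acts with characters $\zeta^{a_{i}+c}$, while $H^{2,0}(S)\cong\bigwedge^{2}H^{2,1}(F)$ carries the characters $\zeta^{a_{i}+a_{j}+2c}$; since the exponents $a_{i}$ run over all of $A(3,3,5)$, none of these characters is trivial on $G$, so $q(X)=p_{g}(X)=0$ and $\chi(\mathcal O_{X})=1$. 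Next I would obtain $c_{2}(X)=\chi_{\mathrm{top}}(X)$ from the covering by stratifying $X$ into its open part, the ten punctured branch curves and the fifteen nodes, and using the fibre cardinalities $81$, $27$ and $9$ with $\chi_{\mathrm{top}}(S)=27$: the relation $27=81(\chi_{\mathrm{top}}(X)-5)-270+135$ forces $\chi_{\mathrm{top}}(X)=7$, whence Noether's formula gives $K_{X}^{2}=12\chi(\mathcal O_{X})-c_{2}(X)=5$. Finally, with $q(X)=p_{g}(X)=0$ one checks that $-K_{X}$ is ample (the ten curves $\bar E_{ij}$ being the only curves of negative self-intersection), so that $X$ is a del Pezzo surface of degree $5$; as such a surface is unique up to isomorphism, $X\cong\mathcal H_{1}$ and $\eta$ is branched of index $3$ over its ten lines, as claimed.
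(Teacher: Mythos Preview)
Your argument is correct in outline, and your fixed-locus analysis, smoothness via Chevalley--Shephard--Todd (the paper cites Cartan \cite{Cartan} for the same fact), the intersection combinatorics of the $\bar E_{ij}$, and the Euler-characteristic stratification all match the paper's proof. Where you diverge is in how you extract the numerical invariants and how you close. The paper does not compute $q(X)$ or $p_g(X)$; instead it reads off $K_X^2=5$ directly from the ramification formula $K_S=\eta^*K_X+2\Sigma$ together with the relation $\Sigma=2K_S$ (from \cite{Clemens}), so that $\eta^*K_X=-3K_S$ and $3^{4}K_X^2=9\cdot 45$. It then blows down four pairwise disjoint curves among the $\bar E_{ij}$ to obtain a smooth surface with $c_1^2=3c_2=9$ that still contains rational curves, and invokes the last clause of Theorem~\ref{thm:Kobayashi} to identify this surface with $\mathbb{P}^2$; the Petersen intersection pattern then forces the four blown-up points into general position, so $X\cong\mathcal H_1$. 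Your route via the $G$-representation on $H^{2,1}(F)$ and $\bigwedge^{2}H^{2,1}(F)$ to get $q=p_g=0$, followed by Noether for $K_X^2=5$, is a genuine alternative that sidesteps the identity $\Sigma=2K_S$.

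The one real gap is your last step: ``one checks that $-K_X$ is ample (the ten curves $\bar E_{ij}$ being the only curves of negative self-intersection)''. Neither assertion has been justified, and $q=p_g=0$ with $K_X^2=5$ does not by itself exclude, say, a non-minimal surface of general type. The quickest repair, given what you already have, is to observe that $\eta^*(-K_X)=3K_S$ is ample (this does use $\Sigma=2K_S$ after all) and hence $-K_X$ is ample since $\eta$ is finite; alternatively, blow down four disjoint $\bar E_{ij}$ and argue as the paper does.
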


\begin{proof}
Let us outline the proof of Proposition \ref{pro:Let X be}: using
classical results on the quotient of a surface by a group action,
we show that $X$ is a smooth surface, then we compute its Chern numbers,
and prove that the blowing down of four $(-1)$-curves on $X$ is
the plane, and that allows us to conclude that $X$ is the degree
$5$ del Pezzo surface $\mathcal{H}_{1}$.

In order to prove that the surface $X$ is smooth, we need to recall
two lemmas:\\
Let $s$ be a point of $S$. Let us denote by $T_{S,s}$ the tangent
space of $S$ at $s$, by $L_{s}\hookrightarrow F$ the line on $F\hookrightarrow\mathbb{P}^{4}=\mathbb{P}(\mathbb{C}^{5})$
corresponding to $s$ and by $P_{s}\subset\mathbb{C}^{5}$ the subjacent
plane to the line $L_{s}$. 
\begin{lem}
(\cite{Roulleau}, Proposition 12). \label{lem:Let-s-be}Let $s$
be a fixed point of an automorphism $\rho(f)$ ($f\in A(3,3,5)$).
The plane $P_{s}$ is stable under the action of $f$ and the eigenvalues
of \[
d\rho(f):T_{S,s}\rightarrow T_{S,s}\]
 are equal to the eigenvalues of the restriction of $f\in A(3,3,5)$
to the plane $P_{s}\subset\mathbb{C}^{5}$.
\end{lem}
Hence this Lemma gives us the action of the differential $d\rho(f)$
on the fixed points of $\rho(f)$. Recall:
\begin{lem}
(\cite{Roulleau}, Theorem 26). The Fano surface $S$ of the Fermat
cubic contains $30$ elliptic curves, denoted by $E_{ij}^{\beta}$
for indices $1\leq i<j\leq5,\,\beta\in\mu_{3}$. Each curve $E_{ij}^{\beta}$
parametrizes the lines on a cone in the cubic $F$. Their configuration
is as follows :

\[
E_{ij}^{\beta}E_{st}^{\gamma}=\left\{ \begin{array}{cc}
1 & \textrm{if }\{i,j\}\cap\{s,t\}=\emptyset\\
-3 & \textrm{if }E_{ij}^{\beta}=E_{st}^{\gamma}\\
0 & \textrm{otherwise}.\end{array}\right.\]

\end{lem}
The elements of the group $G$ have order $3$, for each of them it
is easy to compute it closed set of fixed points. Let $I$ be the
set of points $s$ in $S$ such that $s$ is an isolated fixed point
of an element of $G$ : $I$ is the set of the $135$ intersection
points of the $30$ elliptic curves. Let $i,j,s,t$ be indices such
that $\{i,j\}\cap\{s,t\}=\emptyset$ and let $s$ be the intersection
point of $E_{ij}^{1}$ and $E_{st}^{1}$. The orbit of $s$ by $G$
is the set of the $9$ intersection points of the curves $E_{ij}^{\beta}$
and $E_{st}^{\gamma}$, $\beta,\gamma\in\mu_{3}$. Let be $s\in I$,
the group: \[
G_{s}=\{g\in G/s\textrm{ is a isolated fixed point of }g\}\]
is isomorphic to $\mu_{3}^{2}$ and, by Proposition \ref{lem:Let-s-be},
its representation on the space $T_{S,s}$ is isomorphic to the representation:\[
(\alpha_{1},\alpha_{2})\in\mu_{3}^{2},\;\;\;(\alpha_{1},\alpha_{2}).(x,y)=(\alpha_{1}x,\alpha_{2}y)\in\mathbb{C}^{2}\]
on $\mathbb{C}^{2}$. That implies by \cite{Cartan}, that the image
of $s$ is a smooth point of $X$, thus $X$ is smooth. 

The ramification index of $\eta:S\rightarrow X$ at the points of
$I$ is $9$ and the ramification index of $\eta$ on the curve $E_{ij}^{\beta}$
is $3$. Let us denote by $K_{V}$ the canonical divisor of a surface
$V$. Let be $\Sigma=\sum_{i,j,\beta}E_{ij}^{\beta}$ ; the ramification
divisor of $\eta:S\rightarrow X$ is $2\Sigma$ and \[
K_{S}=\eta^{*}K_{X}+2\Sigma.\]
 By \cite{Clemens}, Lemma 8.1 and Proposition 10.21, we know moreover
that $\Sigma=2K_{S}$, hence $3^{4}(K_{X})^{2}=(\eta^{*}K_{X})^{2}=(-3K_{S})^{2}=9.45$
and $(K_{X})^{2}=5$.

The stabilizer in $G$ of an elliptic curve $E_{ij}^{\beta}\hookrightarrow S$
contains $27$ elements and the group that fixes each point of $E_{ij}^{\beta}$
has $3$ elements. Let $\eta_{ij}^{\beta}:E_{ij}^{\beta}\rightarrow X_{ij}$
be the restriction of $\eta$ to $E_{ij}^{\beta}$. The curve $X_{ij}$
is smooth because it is the quotient of a smooth curve by an automorphism
group. The map $\eta_{ij}^{\beta}$ is a degree $9$ ramified cover
over $3$ points with ramification index $3$, hence: \[
0=\chi_{top}(E_{ij}^{\beta})=9(\chi_{top}(X_{ij})-3)+3.3\]
and $\chi_{top}(X_{ij})=2$ : $X_{ij}$ is a smooth rational curve.
As: \[
\eta^{*}X_{ij}=3(E_{ij}^{1}+E_{ij}^{\alpha}+E_{ij}^{\alpha^{2}}),\]
we deduce that the $10$ curves $X_{ij}$ have the following configuration:
\[
X_{ij}X_{st}=\left\{ \begin{array}{cc}
1 & \textrm{if }\{i,j\}\cap\{s,t\}=\emptyset\\
-1 & \textrm{if }X_{ij}=X_{st}\\
0 & \mbox{otherwise.}\end{array}\right.\]
Let $I'=\eta(I)$ and let $\Sigma'=\sum X_{ij}$. By additive property
of the Euler characteristic, we have: \[
3^{3}=\chi_{top}(S)=3^{4}\chi_{top}(X-\Sigma')+3^{3}\chi_{top}(\Sigma'-I')+3^{2}\chi_{top}(I').\]
Moreover, $\chi_{top}(\Sigma')=5$ (for an example of such computation
see \cite{Sakai}) and we obtain $\chi_{top}(X)=7$. We can blow-down
four disjoint $(-1)$-curves among the ten curves $X_{ij}$ and we
obtain a surface with Chern numbers:\[
c_{1}^{2}=3c_{2}=9\]
but this surface contains $6$ rational curves. Hence, by Theorem
\ref{thm:Kobayashi}, it is the plane : $X$ is the blow-up of the
plane at four points. These points are in general position because
of the intersection numbers of the $X_{ij}$, therefore $X$ is the
degree $5$ del Pezzo surface $\mathcal{H}_{1}$ and the $X_{ij}$
are its ten $(-1)$-curves. 

We proved that the quotient map $\eta:S\rightarrow X=\mathcal{H}_{1}$
is an Abelian cover branched over the ten $(-1)$-curves of $X$ with
ramification index 3, and that completes the proof of Proposition
\ref{pro:Let X be}.
\end{proof}

Let us now prove that:
\begin{prop}
\label{pro:There-exists-a}There exists an étale map ${\kappa:\mathcal{H}}_{3}\rightarrow S$
of degree $3$. 
\end{prop}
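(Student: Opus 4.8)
The plan is to recognize $\eta\colon S\to\mathcal{H}_1$ and Hirzebruch's cover $\zeta\colon\mathcal{H}_3\to\mathcal{H}_1$ (of degree $3^5$, branched with order $3$ over the ten $(-1)$-curves) as two abelian covers of $\mathcal{H}_1$ branched over the \emph{same} divisor $D=\sum_{i<j}X_{ij}$ with the \emph{same} index $3$, and then to produce $\kappa$ as the map realizing $S$ as an intermediate cover $\mathcal{H}_3\to\mathcal{H}_3/H\cong S$ for a subgroup $H\cong\mathbb{Z}/3\mathbb{Z}$ of $\mathrm{Gal}(\mathcal{H}_3/\mathcal{H}_1)$. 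Write $U=\mathcal{H}_1\setminus D$ and $V=H_1(U;\mathbb{Z})$. By Proposition \ref{pro:Let X be} the cover $\eta$ is abelian with group $G\cong(\mathbb{Z}/3\mathbb{Z})^4$, with inertia of order $3$ over the smooth points of each $X_{ij}$ and of order $9$ over the crossing points of $D$; by Namba's classification of ramified abelian covers it is encoded by a surjection $\psi\colon V\to G$ carrying the meridian $\gamma_{ij}$ of $X_{ij}$ to an element of order $3$. In the same way $\zeta$ is encoded by a surjection $\phi\colon V\to\tilde G=\mathrm{Gal}(\mathcal{H}_3/\mathcal{H}_1)$, an abelian group of order $3^5$.

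Since $\mathcal{H}_1$ is simply connected, the Gysin sequence shows that $V$ is generated by the meridian classes $\gamma_{ij}$; each $\phi(\gamma_{ij})$ has order $3$, so $\tilde G$ is generated by elements of order $3$ and is therefore elementary abelian, $\tilde G\cong(\mathbb{Z}/3\mathbb{Z})^5$. Moreover the presentation $H_2(\mathcal{H}_1;\mathbb{F}_3)\xrightarrow{\,\cap X_{ij}\,}\mathbb{F}_3^{10}\to V/3V\to0$, together with the fact that the ten $(-1)$-curves span $\mathrm{Pic}(\mathcal{H}_1)$ (so their intersection matrix has $\mathbb{F}_3$-rank $5$), gives $\dim_{\mathbb{F}_3}V/3V=5$. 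Hence $\phi$ induces an isomorphism $V/3V\xrightarrow{\sim}\tilde G$ and $\ker\phi=3V$. Because $G$ has exponent $3$ we have $3V\subseteq\ker\psi$, so $\psi$ factors as $\psi=q\circ\phi$ for a surjective homomorphism $q\colon\tilde G\to G$ with $H:=\ker q\cong\mathbb{Z}/3\mathbb{Z}$. By the Galois correspondence this exhibits $S$ as $\mathcal{H}_3/H$, and I set $\kappa\colon\mathcal{H}_3\to S$ to be the quotient, of degree $|H|=3^5/3^4=3$.

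It remains to see that $\kappa$ is étale, i.e. that $H$ acts freely. Over a smooth point of $X_{ij}$ the inertia of $\zeta$ is $\langle\phi(\gamma_{ij})\rangle\cong\mathbb{Z}/3\mathbb{Z}$, which $q$ maps isomorphically onto $\langle\psi(\gamma_{ij})\rangle$ (again of order $3$), so $H\cap\langle\phi(\gamma_{ij})\rangle=0$. Over a crossing point of $X_{ij}$ and $X_{st}$ the inertia is $\langle\phi(\gamma_{ij}),\phi(\gamma_{st})\rangle$ of order $9$, which $q$ maps isomorphically onto the order-$9$ stabilizer in $G$ furnished by Proposition \ref{pro:Let X be}; hence $H$ again meets it trivially. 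Therefore $H$ fixes no point of $\mathcal{H}_3$ and $\kappa$ is étale. As an independent check of the numerics I would run the abelian-cover formulas on $\zeta$: from $\sum X_{ij}=-2K_{\mathcal{H}_1}$ one gets $\bigl(K_{\mathcal{H}_1}+\tfrac23\sum X_{ij}\bigr)^2=\tfrac19K_{\mathcal{H}_1}^2=\tfrac59$, so $c_1^2(\mathcal{H}_3)=3^5\cdot\tfrac59=135$, and stratifying $\chi_{top}$ over $U$, over the smooth part of $D$, and over the $15$ crossing points gives $c_2(\mathcal{H}_3)=81$; these equal $3\cdot45$ and $3\cdot27$, exactly thrice the Chern numbers of $S$, as an étale triple cover requires.

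The routine ingredients are the $\mathbb{F}_3$-rank computation and the inertia bookkeeping. The genuine content, and the step I expect to be the main obstacle, is the uniform description of $\eta$ and $\zeta$ as instances of one classification of abelian covers of $(\mathcal{H}_1,D)$: it is this that turns the purely group-theoretic factorization $\psi=q\circ\phi$ into an actual morphism of surfaces, and it is precisely where Namba's theorem is indispensable. The delicate point inside it is to guarantee $\dim_{\mathbb{F}_3}V/3V=5$ rather than something larger, i.e. to rule out hidden $3$-torsion in $H_1(U;\mathbb{Z})$ that would enlarge $\tilde G$ beyond $(\mathbb{Z}/3\mathbb{Z})^5$ and destroy the identification $\ker\phi=3V$; this is settled by the explicit intersection matrix of the ten $(-1)$-curves recorded in the proof of Proposition \ref{pro:Let X be}.
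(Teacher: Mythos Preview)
Your argument is correct and follows essentially the same route as the paper: both use Namba's classification of abelian covers branched at most at $D=\sum X_{ij}$ with index $3$ to identify $\mathcal{H}_3$ as the \emph{maximal} such cover (the paper via $Div^{0}(\mathcal{H}_1,D)/\!\sim\cong(\mathbb{Z}/3\mathbb{Z})^5$, you via the equivalent computation $H_1(U)/3\cong(\mathbb{Z}/3\mathbb{Z})^5$), whence $\mathcal{H}_3$ dominates $S$, and then deduce \'etaleness from the fact that the ramification indices of $\eta_3$ and $\eta$ over each $X_{ij}$ coincide. Your inertia bookkeeping and Chern-number check are a more explicit unpacking of the paper's one-line \'etaleness claim, but the strategy is the same.
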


\begin{proof}
To prove Proposition \ref{pro:There-exists-a}, we begin to recall
Namba's results on Abelian covers of algebraic varieties.

Let $D_{1},\dots,D_{s}$ be irreducible hypersurfaces of a smooth
projective variety $M$ and let $e_{1},\dots,e_{s}$ be positive integers.
A covering $\pi:Y\rightarrow M$ is said to branch (resp. to branch
at most) at $D=e_{1}D_{1}+\dots+e_{s}D_{s}$ if the branch locus is
(resp. is contained in) $\cup D_{i}$ and the ramification index over
$D_{i}$ is $e_{i}$ (resp. divides $e_{i}$). An Abelian covering
$\pi:Y\rightarrow M$ which branches at $D$ is said to be maximal
if for every Abelian covering $\pi_{1}:Y_{1}\rightarrow M$ which
branches at most at $D$, there is a map $\kappa:Y\rightarrow Y_{1}$
such that $\pi=\pi_{1}\circ\kappa$. Let :\[
Div^{0}(M,D)=\{\hat{E}=\frac{a_{1}}{e_{1}}D_{1}+\dots+\frac{a_{s}}{e_{s}}D_{s}+E/a_{i}\in\mathbb{Z},\, E\,\mbox{integral},\, c_{1}(\hat{E})=0\}.\]
We say that $F_{1},F_{2}\in Div^{0}(M,D)$ are linearly equivalent
$F_{1}\sim F_{2}$ if $F_{1}-F_{2}$ is integral and is a principal
divisor. The following result is due to Namba:
\begin{thm}
\label{thm de namba}(Namba \cite{Namba2}, Thm. 2.3.18.). There is
a bijective map of the set of (isomorphism classes of) Abelian coverings
$\pi:Y\rightarrow M$ branched at most at $D$ onto the set of finite
subgroups $\mathcal{G}$ of $Div^{0}(M,D)/\sim$. Let $G_{\pi}$ be
the transformation group of the cover $\pi:Y\rightarrow M$. The bijective
map satisfies:\\
(1) $G_{\pi}\simeq\mathcal{G}(\pi)$\\
(2) let $\pi_{1}:Y_{1}\rightarrow M$ and $\pi_{2}:Y_{2}\rightarrow M$
be Abelian covers branched at most at $D$. There is a map $\kappa:Y_{1}\rightarrow Y_{2}$
such that $\pi_{1}=\pi_{2}\circ\kappa$ if and only if $\mathcal{G}(\pi_{1})\subset\mathcal{G}(\pi_{2})$.
\end{thm}
One applies this Theorem to the ten $(-1)$-curves $X_{ij}$ of $X=\mathcal{H}_{1}$
with $e_{i}=3$. The group $Div^{0}(\mathcal{H}_{1},D)/\sim$ is isomorphic
to $(\mathbb{Z}/3\mathbb{Z})^{5}$ (for brevity, we skip the proof,
but for an example of such a computation, see the proof of Lemma \ref{lem:The-group-Namba}).
As $\eta_{3}:\mathcal{H}_{3}\rightarrow\mathcal{H}_{1}$ is a degree
$3^{5}$ Abelian cover branched over the $(-1)$-curves with index
$3$, the group $\mathcal{G}(\eta_{3})$ is equal to $Div^{0}(\mathcal{H}_{1},D)/\sim$.
Thus by Theorem \ref{thm de namba}, there exists $\kappa:\mathcal{H}_{3}\rightarrow S$
such that $\eta_{3}=\eta\circ\kappa$. As the maps $\eta$ and $\eta_{3}$
are branched with order $3$ over the ten $(-1)$-curves of $\mathcal{H}_{1}$,
the map $\kappa$ is étale. That completes the proof of Proposition \ref{pro:There-exists-a}.
\end{proof}

\section{The Fano surface of the Fermat cubic as a ball quotient.}

By \cite{Clemens}, Theorem 7.8, (9.14) and (10.11), the Fano surface
$S$ of the Fermat cubic is smooth with invariants $c_{1}^{2}=45$
and $c_{2}=27$. Let $S'\subset S$ be the complement of the union
$D$ of $12$ disjoints elliptic curves on $S$ (there are $5$ such
sets of $12$ elliptic curves, we can take by example the $12$ curves
$E_{1i}^{\beta},\,2\leq i\leq5,\,\beta^{3}=1$). Let $\overline{c}_{1}^{2},\overline{c}_{2}$
be the logarithmic Chern numbers of $S'$.
\begin{prop}
\label{cor:The-Chern-ratio}We have : $3\overline{c}_{2}=\overline{c}_{1}^{2}=81$,
therefore $S'$ is a ball quotient.\end{prop}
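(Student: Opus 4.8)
The plan is to compute the two logarithmic Chern numbers $\overline{c}_{1}^{2}=(K_{S}+D)^{2}$ and $\overline{c}_{2}=\chi_{top}(S')$ explicitly from the invariants of $S$ and the intersection table of the elliptic curves, and then to conclude via the equality clause of Theorem \ref{thm:Kobayashi}. First I would assemble the ingredients: $S$ is smooth with $c_{1}^{2}(S)=K_{S}^{2}=45$ and $c_{2}(S)=\chi_{top}(S)=27$; its canonical bundle $K_{S}$ is ample, since $S$ embeds in its Albanese variety (\cite{Clemens}) and hence contains no rational curve, so that $S$ is a minimal surface of general type without $(-2)$-curves; and the twelve curves $E_{1i}^{\beta}$ ($2\le i\le 5$, $\beta\in\mu_{3}$) making up $D$ are pairwise disjoint smooth elliptic curves. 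Disjointness is immediate from the intersection table: for $(i,\beta)\ne(j,\gamma)$ one has $\{1,i\}\cap\{1,j\}\ni 1$, so $E_{1i}^{\beta}E_{1j}^{\gamma}=0$, while $(E_{1i}^{\beta})^{2}=-3$. In particular $D$ is a reduced simple normal crossing divisor, one of the hypotheses of Theorem \ref{thm:Kobayashi}.

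For $\overline{c}_{2}$ I would use additivity of the topological Euler characteristic together with $\chi_{top}(E_{1i}^{\beta})=0$ (these are genus-one curves): since the components of $D$ are disjoint, $\chi_{top}(D)=0$ and thus $\overline{c}_{2}=\chi_{top}(S)-\chi_{top}(D)=27$. For $\overline{c}_{1}^{2}=K_{S}^{2}+2K_{S}\cdot D+D^{2}$ I first compute $K_{S}\cdot E_{1i}^{\beta}$ by adjunction: each $E_{1i}^{\beta}$ being elliptic, $0=2g-2=(E_{1i}^{\beta})^{2}+K_{S}\cdot E_{1i}^{\beta}$ gives $K_{S}\cdot E_{1i}^{\beta}=3$, whence $K_{S}\cdot D=12\cdot 3=36$. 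By disjointness $D^{2}=\sum(E_{1i}^{\beta})^{2}=12\cdot(-3)=-36$, and therefore $\overline{c}_{1}^{2}=45+72-36=81=3\cdot 27=3\overline{c}_{2}$.

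The hard part will be verifying that $K_{S}+D$ is nef and big, the remaining hypothesis of Theorem \ref{thm:Kobayashi}. Bigness will follow from nefness once established, since $(K_{S}+D)^{2}=81>0$. For nefness I would test $(K_{S}+D)\cdot C\ge 0$ against every irreducible curve $C$. If $C$ is not a component of $D$, then $D\cdot C\ge 0$ because distinct irreducible curves meet nonnegatively, while $K_{S}\cdot C>0$ by ampleness of $K_{S}$, so $(K_{S}+D)\cdot C>0$; thus only the components of $D$ can threaten nefness. For $C=E_{1i}^{\beta}$, disjointness gives $D\cdot E_{1i}^{\beta}=(E_{1i}^{\beta})^{2}=-3$, so $(K_{S}+D)\cdot E_{1i}^{\beta}=3+(-3)=0\ge 0$. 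Hence $K_{S}+D$ is nef, and therefore nef and big.

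With $K_{S}$ ample, $D$ reduced and simple normal crossing, and $K_{S}+D$ nef and big, all hypotheses of Theorem \ref{thm:Kobayashi} are met; since we have established the equality $\overline{c}_{1}^{2}=3\overline{c}_{2}=81$, the equality clause of that theorem shows that $S'=S-D$ is a ball quotient, which completes the proof.
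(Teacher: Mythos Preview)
Your proof is correct and follows essentially the same route as the paper's own argument: compute $(K_{S}+D)^{2}=81$ and $\chi_{top}(S')=27$ from $K_{S}^{2}=45$, $K_{S}\cdot E=-E^{2}=3$, and the disjointness of the twelve elliptic curves, then check nefness (hence bigness) of $K_{S}+D$ and invoke the equality case of Theorem~\ref{thm:Kobayashi}. The only cosmetic difference is that you justify ampleness of $K_{S}$ via the Albanese embedding and absence of rational/$(-2)$-curves, whereas the paper simply cites \cite{Clemens}.
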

\begin{proof}
The canonical divisor $K_{S}$ of $S$ is ample, $K_{S}^{2}=45$ and
$K_{S}E=-E^{2}=3$ for an elliptic curve $E\hookrightarrow S$ (see
\cite{Clemens} (0.7) and \cite{Roulleau} Proposition 10), therefore
$K_{S}+D$ is nef. As $\bar{c}_{1}^{2}=(K_{S}+D)^{2}=45+2.12.3-12.3=81$,
the divisor $K_{S}+D$ is also big. As $\bar{c}_{2}(S')=e(S-D)=e(S)=27$,
we obtain $3\overline{c}_{2}=\overline{c}_{1}^{2}$. Because $D$
has no singularities, Theorem \ref{thm:Kobayashi} implies that $S'$
is a ball quotient.
\end{proof}
Let $H$ be the Hermitian diagonal matrix with entries $(1,1,-1)$
defining the $2$-dimensional unit ball $\mathbb{B}_{2}$ into $\mathbb{P}^{2}$.
Let $\alpha$ be a third primitive root of unity and let $\Gamma$
be the congruence group: \[
\Gamma=\{T\in GL_{3}(\mathbb{Z}[\alpha])/\, T\equiv I\,\mbox{modulo}\,(1-\alpha)\,\mbox{and}{\,\,}^{t}\bar{T}HT=H\},\]
where $I$ is the identity matrix. As $\kappa$ is étale and $S'$
is a ball quotient, the surface $\mathcal{T}=\kappa^{-1}S'\subset\mathcal{H}_{3}$
is a ball quotient. Let $\Lambda$ be the transformation group of
the $2$-dimensional unit ball $\mathbb{B}_{2}$ such that $\Lambda\setminus\mathbb{B}_{2}\simeq\mathcal{T}$.
We have:

\begin{thm}
\label{thm:The-group- lambda}The group $\Lambda$ is the commutator
group of $\Gamma$.
 \end{thm}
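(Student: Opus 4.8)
The plan is to identify the ball quotient $\mathcal{T}=\kappa^{-1}S'$ with a Deligne--Mostow orbifold and track its monodromy lattice precisely. The key structural fact is that $X=\mathcal{H}_1$ is, by the discussion following the main theorem, isomorphic to the quotient $Q=M/\mathrm{Aut}(\mathbb{P}^1)$ parametrizing five points on $\mathbb{P}^1$, with its ten $(-1)$-curves $X_{ij}$ corresponding to the ten boundary divisors $\{x_i=x_j\}$. Under this identification the Deligne--Mostow theory for a weight $5$-tuple $\mu$ equips the complement $Q\setminus\bigcup X_{ij}$ with a ball-quotient orbifold structure, and the monodromy image $\Gamma_\mu\subset\mathrm{Aut}(\mathbb{B}_2)$ is the orbifold fundamental group. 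First I would verify (this is part D), proved separately) that the relevant tuple is $(1/3,1/3,1/3,1/3,2/3)$, whose associated lattice is exactly the congruence group $\Gamma$ of the statement; I will take this matching as given here.

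Next I would reinterpret the covers constructed in Section~2 in orbifold-uniformization language. By Proposition~\ref{pro:Let X be}, the map $\eta:S\to\mathcal{H}_1$ is the Abelian cover branched with index $3$ over the ten curves $X_{ij}$; by Proposition~\ref{cor:The-Chern-ratio} and Theorem~\ref{thm:Kobayashi}, $S'$ is a genuine ball quotient, and since $\kappa:\mathcal{H}_3\to S$ is \'etale, so is $\mathcal{T}=\kappa^{-1}S'$. The composite $\eta\circ\kappa=\eta_3:\mathcal{H}_3\to\mathcal{H}_1$ and the intermediate $\eta:S\to\mathcal{H}_1$ are both branched with order exactly $3$ over all ten $X_{ij}$. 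The essential point is that branching with order $3$ over each $X_{ij}$ is precisely the covering that \emph{kills} the orbifold locus of the Deligne--Mostow structure for the tuple $(1/3,\dots,1/3,2/3)$: the local monodromy around each boundary divisor has order $3$ in $\Gamma_\mu=\Gamma$, so a cover realizing exactly that branching index smooths out every orbifold point and uniformizes the quotient as an honest ball quotient. Thus $\mathcal{T}$, being the ball quotient sitting over $Q\setminus\bigcup X_{ij}$ via a cover with this branching profile, is uniformized by the subgroup of $\Gamma$ on which all the order-$3$ local monodromies become trivial.

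I would then make the group-theoretic identification explicit. Each local monodromy generator around $X_{ij}$ maps, under $\Gamma\cong\Gamma_\mu\subset PU(2,1)$, to a complex reflection of order $3$. The subgroup $\Lambda\subset\Gamma$ corresponding to $\mathcal{T}$ is the kernel of the composite $\Gamma\to\prod_{ij}\mathbb{Z}/3\mathbb{Z}$ recording these local orders, equivalently the smallest normal subgroup with $\Gamma/\Lambda$ abelian of exponent $3$ generated by the reflection images. To show $\Lambda$ equals the commutator subgroup $[\Gamma,\Gamma]$, I would argue in two inclusions: since $\Gamma/\Lambda$ is abelian, $[\Gamma,\Gamma]\subseteq\Lambda$; conversely, the abelianization $\Gamma^{\mathrm{ab}}$ for this Deligne--Mostow lattice is itself an elementary abelian $3$-group generated by the classes of the ten reflections, so the map $\Gamma\to\prod\mathbb{Z}/3\mathbb{Z}$ factors through $\Gamma^{\mathrm{ab}}$ and $\Lambda$ already contains $[\Gamma,\Gamma]$ with abelian quotient of the same order, forcing $\Lambda=[\Gamma,\Gamma]$. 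Matching the orders via the degree computation $[\Gamma:\Lambda]=\deg(\eta_3)=3^5$ divided by the deck group of $\kappa$, and comparing with $|\Gamma^{\mathrm{ab}}|$, closes the argument.

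The main obstacle I expect is the computation of the abelianization $\Gamma^{\mathrm{ab}}$ and the verification that the branched cover $\mathcal{T}\to Q$ corresponds to precisely the commutator subgroup and not some larger subgroup between $[\Gamma,\Gamma]$ and $\Lambda$. This requires knowing the abelianization of the Deligne--Mostow lattice for $(1/3,1/3,1/3,1/3,2/3)$ exactly — in particular that it has order $3^4$ (matching $\deg\eta=3^4$ from Proposition~\ref{pro:Let X be}) rather than a proper quotient — and that the ten reflection classes satisfy exactly the relations forced by the braid/sphere-point-configuration presentation of $\pi_1(Q)$. This is the analog of the Yamazaki--Yoshida lattice computation for $\mathcal{H}_5$, and I would follow their strategy: use the explicit generators-and-relations presentation of the monodromy group coming from the configuration space of five points, compute the abelianization directly from those relations, and read off that the cover defined by trivializing all reflections is exactly the derived cover.
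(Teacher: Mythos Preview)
Your overall strategy is sound --- identify the orbifold whose uniformizing group is $\Gamma$, observe that the cover corresponding to $\mathcal{T}$ is Abelian (so $[\Gamma,\Gamma]\subseteq\Lambda$), and then match indices --- but there is a numerical confusion and you miss the simplification that makes the paper's argument short.

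First, the index. The lattice $\Lambda$ uniformizes $\mathcal{T}\subset\mathcal{H}_3$, not $S'$, so $[\Gamma:\Lambda]=\deg(\eta_3)=3^5$; there is nothing to divide by the deck group of $\kappa$. Consequently the abelianization you need is $|\Gamma^{\mathrm{ab}}|=3^5$, not $3^4$. Your sentence matching $|\Gamma^{\mathrm{ab}}|$ with $\deg\eta=3^4$ is in error and would leave a gap of index $3$.

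Second, and more importantly, the paper does not compute $\Gamma^{\mathrm{ab}}$ from a presentation at all. It passes from $\mathcal{H}_1$ down to $\mathbb{P}^2$ via the blow-up $\tau$, so that the ten $(-1)$-curves become the six lines of the complete quadrilateral (with the four triple points carrying orbifold weight $\infty$). The orbifold $(\mathbb{P}^2,b)$ thus obtained is known (Holzapfel) to be uniformized by $\mathbb{B}_2$ with group $\Gamma$. Then Namba's Theorem~\ref{thm de namba} is applied \emph{to the plane}: a direct divisor computation (Lemma~\ref{lem:The-group-Namba}) gives $Div^0(\mathbb{P}^2,D)/\!\sim\;\cong(\mathbb{Z}/3\mathbb{Z})^5$, so the maximal Abelian cover branched with index $3$ along the six lines has degree exactly $3^5$. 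Since $\mathcal{H}_3'\to\mathbb{P}^2$ already has degree $3^5$, it \emph{is} the maximal Abelian cover, and the maximal Abelian cover of any orbifold corresponds to the commutator subgroup of its orbifold fundamental group. Hence $\Lambda=[\Gamma,\Gamma]$ with no further work. What you flag as the ``main obstacle'' --- computing $\Gamma^{\mathrm{ab}}$ from generators and relations in the style of Yamazaki--Yoshida --- is entirely bypassed: Namba's classification of Abelian branched covers computes $|\Gamma^{\mathrm{ab}}|$ geometrically, and equality of degrees forces the identification.
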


\begin{proof}

In order to compute $\Lambda$, we combine ideas in \cite{Yamazaki},
where Yamazaki and Yoshida computed the lattice of the Ball quotient
surface $\mathcal{H}_{5}$, and we use Namba's Theorem \ref{thm de namba}.

Let $\ell_{1},\dots,\ell_{6}\in H^{0}(\mathbb{P}^{2},\mathcal{O}(1))$
be the linear forms defining the $6$ lines on the plane going through
$4$ points in general position. Let $\mathcal{H}_{3}'$ be the normal
algebraic surface determined by the field \[
\mathbb{C}(\mathbb{P}^{2})((\frac{\ell_{2}}{\ell_{1}})^{1/3},\dots,(\frac{\ell_{2}}{\ell_{1}})^{1/3}).\]
It is an Abelian cover $\pi:\mathcal{H}_{3}'\rightarrow\mathbb{P}^{2}$ of degree $3^{5}$ of the plane branched with
order $3$ over the $6$ lines $\{\ell_{i}=0\}$ and the surface $\mathcal{H}_{3}$
is the fibered product of $\pi:\mathcal{H}_{3}'\rightarrow\mathbb{P}^{2}$
and the blow-up map $\tau:\mathcal{H}_{1}\rightarrow\mathbb{P}^{2}$ (see \cite{Sommese}
(1.3)). The situation is as follows:\[
\begin{array}{ccccc}
 & \stackrel{\kappa}{\swarrow} & \mathcal{H}_{3} & \stackrel{\iota}{\rightarrow} & \mathcal{H}'_{3}\\
S &  & \downarrow\eta_{3} &  & \downarrow\pi\\
 & \stackrel{\searrow}{\eta} & \mathcal{H}_{1} & \stackrel{\tau}{\rightarrow} & \mathbb{P}^{2}.\end{array}\]
We apply Namba's Theorem \ref{thm de namba} to the $6$ lines of
the complete quadrilateral on the plane, with weights $e_{i}=3$.

\begin{lem}
\label{lem:The-group-Namba}The group $Div^{0}(\mathbb{P}^{2},D)/\sim$
is isomorphic to $(\mathbb{Z}/3\mathbb{Z})^{5}$. 

\end{lem}

\begin{proof}
Let $L_{i}$ be the line $\{\ell_{i}=0\}$ and let $L$ be a generic
line. The group: \[
Div^{0}(\mathbb{P}^{2},D)/\sim=\{aL+\sum\frac{a_{i}}{3}L_{i}/a,a_{1},\dots,a_{6}\in\mathbb{Z}\mbox{ and }3a+\sum a_{i}=0\}/\sim.\]
 it is a sub-group of: \[
Div(\mathbb{P}^{2},D)/\sim=\{aL+\sum\frac{a_{i}}{3}L_{i}/a,a_{1},\dots,a_{6}\in\mathbb{Z}\}/\sim,\]
where the rational divisor $E=aL+\sum\frac{a_{i}}{3}L_{i}$ in $Div(\mathbb{P}^{2},D)$
is equivalent to $0$ if and only if the $a_{i},\,1\leq i\leq6$ are
divisible by $3$ and $c_{1}(E)=a+\mbox{\ensuremath{\frac{1}{3}}}\sum a_{i}=0$
(here we use that linear and numerical equivalences are equal on the
plane). The map :\[
\phi:\,\begin{cases}
\begin{array}{ccc}
Div(\mathbb{P}^{2},D)/\sim & \rightarrow & \mathbb{Z}\times(\mathbb{Z}/3\mathbb{Z})^{6}\\
aL+\sum\frac{a_{i}}{3}L_{i} & \rightarrow & (3a+\sum a_{i},\bar{a}_{1},\dots,\bar{a}_{6})\end{array}\end{cases}\]
is well defined and is an isomorphism. The group $Div^{0}(\mathbb{P}^{2},D)/\sim$
is isomorphic to:\[
\{(a,\bar{a}_{1},\dots,\bar{a}_{6})\in\mathbb{Z}\times(\mathbb{Z}/3\mathbb{Z})^{6}/a=0\mbox{ and }\sum\bar{a}_{i}=0\}\]
and is therefore isomorphic to $(\mathbb{Z}/3\mathbb{Z})^{5}$.
 \end{proof}

By Lemma \ref{lem:The-group-Namba} and Theorem \ref{thm de namba},  the degree $3^5$ Abelian cover $\pi:\mathcal{H}'_{3}\rightarrow\mathbb{P}^{2}$ is the maximal Abelian cover.

Let $b:\mathbb{P}^{2}\rightarrow\mathbb{N}$ be the function such
that $b(p)=1$ outside the complete quadrilateral, $b(p)=3$ on the
complete quadrilateral minus the $4$ triple points $p_{1},\dots,p_{4}$,
and $b(p)=\infty$ on these $4$ points. The pair $(\mathbb{P}^{2},b)$
is an orbifold (for the theory of orbifold we refer to \cite{Yoshida},
Chap. 5). By \cite{Holzaphel}, Chap. 5, the universal cover of that
orbifold is $\mathbb{B}_{2}$ with the transformation group $\Gamma$.
Therefore, a cover $Z\rightarrow\mathbb{P}^{2}$ with branching index
$3$ over the complete quadrilateral corresponds to a normal sub-group
$K$ of $\Gamma$ and $\Gamma/K$ is isomorphic to the group of transformation
of the covering $Z\rightarrow\mathbb{P}^{2}$.

If moreover, the cover $Z\rightarrow\mathbb{P}^{2}$ is Abelian, the
group $K$ contains the commutator group $[\Gamma,\Gamma]$, thus
$\mathbb{B}_{2}/[\Gamma,\Gamma]$ is the maximal Abelian cover of
$(\mathbb{P}^{2},b)$. We have seen that the cover $\pi:{\mathcal{H}'}_{3}\rightarrow\mathbb{P}^{2}$
of degree $3^{5}$ is maximal among Abelian covers of $(\mathbb{P}^{2},b)$,
thus the lattice of the ball quotient $\mathcal{T}$ is the commutator
group $[\Gamma,\Gamma]$. 
\end{proof}
Moreover, we remark that :
\begin{thm}
The lattice $\Gamma$ is the Deligne Mostow lattice number $1$ in
\cite{Deligne} p. 86.\end{thm}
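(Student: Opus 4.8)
The plan is to identify our congruence group $\Gamma$ with the monodromy group $\Gamma_\mu$ of the Deligne--Mostow $5$-tuple $\mu = (1/3,1/3,1/3,1/3,2/3)$ by matching the two data that characterize a Deligne--Mostow lattice: the underlying cover of the configuration space, and the Hermitian form carrying the monodromy. First I would recall, from the setup of the introduction, that the Deligne--Mostow construction for $\mu$ produces a cyclic cover of $\mathbb{P}^1$ of degree $d = \mathrm{l.c.m.}$ of the denominators (here $d = 3$) given by $v^3 = \prod_{k=1}^{5}(z-x_k)^{n_k}$ with $(n_1,\dots,n_5) = (1,1,1,1,2)$, and that the period map sends $Q = M/\Aut(\mathbb{P}^1)$ into $\mathbb{B}_2 \subset \mathbb{P}^2$ with monodromy group $\Gamma_\mu \subset PGL(W_\mu^*)$. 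The key observation is that $Q$ is exactly the complement of the ten $(-1)$-curves of the degree-$5$ del Pezzo surface $\mathcal{H}_1$, so both $\Gamma$ and $\Gamma_\mu$ are realized as lattices acting on $\mathbb{B}_2$ whose quotient orbifold has base $\mathbb{P}^2$ (equivalently $\mathcal{H}_1$) branched along the complete quadrilateral.

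The main work is then arithmetic: to show that the Hermitian lattice $(\mathbb{Z}[\alpha]^3, H)$ with $H = \mathrm{diag}(1,1,-1)$ and $\alpha$ a primitive cube root of unity is the one attached to $\mu$ by Deligne--Mostow. I would proceed by computing the signature and the Hermitian form of the local system $W_\mu$ directly. The cohomology of the cyclic cover $v^3 = \prod(z-x_k)^{n_k}$ decomposes into eigenspaces under the $\mu_3$-action $v \mapsto \alpha v$; the relevant eigenspace has dimension $3$ and its intersection form has signature $(2,1)$ by the Deligne--Mostow signature formula, which for the $5$-tuple with $\sum \mu_i = 2$ gives exactly a ball $\mathbb{B}_2$. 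Over the ring of integers $\mathbb{Z}[\alpha]$ of $\mathbb{Q}(\alpha)$, this Hermitian form is unimodular of signature $(2,1)$, and such a form over $\mathbb{Z}[\alpha]$ is unique up to isometry, hence isometric to $\mathrm{diag}(1,1,-1) = H$. This pins down the ambient group $U(H,\mathbb{Z}[\alpha]) = \{T : {}^t\bar{T}HT = H\}$ as the integral unitary group carrying the monodromy.

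Next I would match the congruence and branching conditions. The Deligne--Mostow lattice $\Gamma_\mu$ is the image of $\pi_1(Q) \to \Aut(\mathbb{B}_2)$, and the local monodromy around each branch divisor $\{x_i = x_j\}$ is a complex reflection of order $d/\gcd(\dots)$; for our $\mu$ the generic local monodromy has order $3$, matching the branch index $3$ over the complete quadrilateral. The condition $T \equiv I \pmod{1-\alpha}$ is precisely the congruence subgroup condition that Deligne and Mostow use to cut out the monodromy group inside the full integral unitary group, so $\Gamma = \Gamma_\mu$ follows once the Hermitian data agree. I would invoke the explicit tabulation in \cite{Deligne} p.~86, where entry number $1$ is the $5$-tuple $(1/3,1/3,1/3,1/3,2/3)$ with base field $\mathbb{Q}(\alpha)$, to confirm that their lattice is given by exactly this congruence unitary group.

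The hard part will be verifying that the integral structures coincide rather than merely the groups up to finite index or up to conjugacy over $\mathbb{C}$. Two lattices can have the same signature and define isomorphic real ball quotients while sitting differently inside $PGL_3$; to conclude $\Gamma = \Gamma_\mu$ on the nose I must check that the period map identifies the monodromy lattice with $U(H,\mathbb{Z}[\alpha])$ compatibly with the $\mathbb{Z}[\alpha]$-structure, and that the congruence level $(1-\alpha)$ is the correct one (and not some other ideal above $3$). This amounts to tracing through the Deligne--Mostow normalization of the Hermitian form against our diagonal $H$, and confirming the uniqueness of the unimodular Hermitian $\mathbb{Z}[\alpha]$-lattice of signature $(2,1)$; I would lean on the classification of Hermitian forms over the Eisenstein integers to close this gap.
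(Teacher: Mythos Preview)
Your approach is substantially different from the paper's, and the route you sketch has a real gap at its most delicate point.

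The paper's proof is essentially one line: both $\Gamma$ and the Deligne--Mostow lattice $\Gamma_\mu$ for $\mu=(1/3,1/3,1/3,1/3,2/3)$ arise as the orbifold fundamental group of the \emph{same} orbifold $(\mathbb{P}^2,b)$ (branch index $3$ along the complete quadrilateral, $\infty$ at the four triple points). That $\Gamma$ uniformizes this orbifold was already quoted from Holzapfel in the proof of the preceding theorem; that $\Gamma_\mu$ does is the content of the Deligne--Mostow theorem applied to tuple number~$1$. Uniqueness of the orbifold universal cover then forces $\Gamma=\Gamma_\mu$ up to conjugacy in $\Aut(\mathbb{B}_2)$, which is exactly what ``is the Deligne--Mostow lattice'' means. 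No Hermitian-form bookkeeping is needed.

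Your plan instead tries to match the arithmetic data directly: compute the Hermitian form on the relevant eigenspace of $H^1$ of the cyclic triple cover, identify it with $\mathrm{diag}(1,1,-1)$ over $\mathbb{Z}[\alpha]$, and then match the congruence level. The problem is the step where you write ``The condition $T\equiv I\pmod{1-\alpha}$ is precisely the congruence subgroup condition that Deligne and Mostow use to cut out the monodromy group.'' Deligne and Mostow do \emph{not} define $\Gamma_\mu$ as a congruence subgroup; they define it as the image of the monodromy representation, and the table on p.~86 of \cite{Deligne} records the $5$-tuples, not explicit matrix-group presentations. So invoking the table does not hand you the congruence description, and your argument at that point assumes the conclusion. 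You acknowledge this in your last paragraph, proposing to close the gap via the classification of Hermitian $\mathbb{Z}[\alpha]$-lattices of signature $(2,1)$; that can be made to work, but it is considerably more effort than the orbifold argument, and you would still need to pin down the level (why $(1-\alpha)$ and not the full integral group or a deeper level) by analyzing the local monodromies carefully. The paper bypasses all of this by comparing universal covers rather than integral structures.
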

\begin{proof}
This is the fact that the universal cover of the orbifold $(\mathbb{P}^{2},b)$
of the proof of Theorem \ref{thm:The-group- lambda} is $\mathbb{B}_{2}$
with the transformation group $\Gamma$. 

\end{proof}

\begin{acknowledgement*}
I wish to thank Amir Dzambic for stimulating discussions on this paper,
and Martin Deraux for its explanations of the numerous questions that
I had on the Deligne-Mostow ball lattices. Finally, I wish to thanks
the Max Planck Institute of Bonn and the Japan Society for Promotion
of Sciences for their financial support.
\end{acknowledgement*}

Xavier Roulleau

Graduate School of Mathematical Sciences, University of Tokyo, 3-8-1
Komaba, Meguro, Tokyo 153-8914, Japan 

roulleau@ms.u-tokyo.ac.jp
\end{document}